\documentclass[a4paper,11pt]{article}
\usepackage[utf8]{inputenc}
\usepackage[T1]{fontenc}

\usepackage{wrapfig}
\usepackage{amsthm,amsmath}
\usepackage{tikz}
\usetikzlibrary{graphs,graphs.standard}
\tikzgraphsset{edges={draw,semithick}, nodes={circle,draw,semithick}}
\usepackage{mathrsfs,amssymb,amsfonts} 
\usepackage{enumitem}
\usepackage{fullpage}
\usepackage{hyperref, enumerate}
\usepackage[babel]{microtype}
\usepackage[english]{babel}
\usepackage[capitalise]{cleveref}

\usepackage{thmtools}
\usepackage{mathtools, comment}
\usepackage{amssymb}
\usepackage[nomath]{lmodern}
\usepackage{graphicx}
\usepackage{pgf,tikz,tkz-graph,subcaption}
\usetikzlibrary{arrows,shapes}
\usetikzlibrary{decorations.pathreplacing}
\usepackage{tkz-berge}
\usepackage{enumitem}
\usepackage[normalem]{ulem}
\usepackage{hyperref}
\hypersetup{colorlinks = true, linkcolor = blue, citecolor = blue, urlcolor = blue}

\newcommand*{\ceilfrac}[2]{\mathopen{}\left\lceil\frac{#1}{#2}\right\rceil\mathclose{}}

\newcommand*{\abs}[1]{\lvert #1\rvert}

\newcommand{\diam}{diam}

\allowdisplaybreaks

\usepackage[margin=1in]{geometry}
\parskip 4pt

\newtheorem{defi}{Definition}

\newtheorem{cor}[defi]{Corollary}
\newtheorem{thr}[defi]{Theorem}

\newtheorem{prop}[defi]{Proposition}
\newtheorem{exam}[defi]{Example}

\newtheorem{claim}[defi]{Claim}
\newcommand*{\myproofname}{Proof}
\newenvironment{claimproof}[1][\myproofname]{\begin{proof}[#1]}{\end{proof}}

\title{Abundancy of $z$-\v Solt\'es' digraphs}

\author{Stijn Cambie
 \thanks{Department of Computer Science, KU Leuven Campus Kulak-Kortrijk, 8500 Kortrijk, Belgium. Supported by a postdoctoral fellowship by the Research Foundation Flanders (FWO) with grant number 1225224N.} }


\begin{document}
\parindent=0cm
\maketitle

\begin{abstract}
 We prove the existence of infinitely many \v Solt\'es' digraphs, the digraph analogue of \v Solt\'es' graphs.
We also give an example of a \v Solt\'es' digraph with trivial automorphism group.
\end{abstract}

\section{Introduction}

As an analogue to \v Solt\'es graphs, which originate from \v Solt\'es' paper~\cite{Soltes91} from 1991, and the extended question~\cite[Prob.~3]{AOVVVY23}, we define a $z$-\v Solt\'es' digraph as a digraph $D$ for which the removal of any vertex decreases its total distance by exactly $z$. Here the total distance, $W(D)$, is the sum of distances (length of a shortest connecting path) over all order pairs of vertices.
When $z=0$, such a digraph is naturally named a \v Solt\'es' digraph.
When $z \le 0$, such a digraph is a negative-\v Solt\'es' digraph.

We try to strengthen the intuition behind the main question in~\cite[Ques.~13]{Cambie24}, which asks if there are infinitely many negative-\v Solt\'es' graphs with minimum degree at least $3$, by confirming it in the digraph case. We do so by first proving that there are infinitely many negative-\v Solt\'es' digraphs, and further strengthening this to the existence of infinitely many \v Solt\'es' digraphs.

\begin{thr}\label{thr:main}
    For every $z \in \mathbb Z,$ there are infinitely many digraphs $D$ for which $W(D)-W(D \setminus v)=z$ for every $v \in V.$
\end{thr}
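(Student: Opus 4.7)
The plan is to construct, for each $z\in\mathbb Z$, an infinite family of digraphs in which the property $W(D)-W(D\setminus v)=z$ holds for every vertex $v$. The most natural way to guarantee uniformity across $v$ is to restrict to vertex-transitive digraphs, since there $W(D)-W(D\setminus v)$ is automatically independent of the choice of $v$, and the problem reduces to computing a single quantity and controlling it through the construction parameters. My first step would be to look at Cayley (circulant) digraphs on the cyclic group, $D_n=C(n,S)$, for a fixed small connection set $S\subset\mathbb Z/n\setminus\{0\}$. By vertex-transitivity, $W(D_n)=n\sum_{k\ne 0}d_S(0,k)$ is straightforward; the task is then to derive a closed-form (or affine-in-$n$) expression for $W(D_n\setminus 0)$ by analysing how shortest paths reroute once vertex $0$ is deleted. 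Provided $S$ is rich enough that $D_n\setminus 0$ stays strongly connected, the difference $f(n,S):=W(D_n)-W(D_n\setminus 0)$ should be piecewise affine in $n$, with slope and intercept controlled by $|S|$ and by arithmetic conditions on the elements of $S$.

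The next step is to realise each integer $z$. Tuning $|S|$ and the sizes of its elements gives some control over the leading coefficient of $f(n,S)$, and hence over its sign; adding small local modifications (extra short back-arcs, or combining with a carefully chosen auxiliary digraph via a product-like operation that preserves the per-vertex balance) should allow us to shift the intercept by a controllable amount. Together these two degrees of freedom let us hit any prescribed $z$ along an arithmetic progression of $n$. A clean way to organise this is first to handle the negative-\v Solt\'es cases ($z<0$), where the rerouted paths more than compensate for the lost ordered pairs, and then to absorb a compensating modification until $z=0$ and, by iterating it, arbitrary $z\in\mathbb Z$ are reached.

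The hardest part will be the exact computation of $W(D_n\setminus v)$: even though $D_n$ itself is vertex-transitive, deleting $v$ destroys that symmetry and forces many pairs of vertices to reroute along longer detours whose lengths depend delicately on $S$ and on divisibility conditions on $n$. A secondary difficulty is to keep $D_n\setminus v$ strongly connected throughout the family so that all distances remain finite, and to ensure that any modification used to adjust $z$ does not itself break the property that $W(D)-W(D\setminus v)$ is independent of $v$. Overcoming these obstacles likely forces a concrete choice of $S$ (for instance containing a single forward generator $+1$ together with a long backward arc) for which every rerouting can be described in a single closed formula.
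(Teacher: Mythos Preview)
Your high-level framework coincides with the paper's: work with circulant (Cayley) digraphs on $\mathbb Z/n$, so that vertex-transitivity forces $W(D)-W(D\setminus v)$ to be independent of $v$, and then adjust parameters to hit a given $z$. The genuine gap is in how you propose to do the adjustment. Your plan rests on the expectation that, for a \emph{fixed} connection set $S$, the function $f(n,S)=W(D_n)-W(D_n\setminus 0)$ is piecewise affine in $n$, so that $z$ can be hit along an arithmetic progression of $n$. This is false. Writing $m=\max S$ and $n\approx a(m+1)$, one finds $\sigma^{\pm}(v)\sim (m+1)\binom{a+1}{2}$, which is quadratic in $a$, whereas the total extra distance incurred by rerouting around the deleted vertex is $\sim\frac{m+1}{m}\binom{a}{3}$, cubic in $a$. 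Hence $f(n,S)$ is a genuine cubic in $n$ for fixed $S$ and diverges to $-\infty$; no bounded ``intercept'' correction, product construction, or added back-arc will make it land on a fixed $z$ for infinitely many $n$.

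What the paper does instead is \emph{couple} $n$ with $S$. For each large $m$ it starts from $S=[m]$ and chooses $n\sim 6m^2$ (specifically $a=6m-1$), the regime in which the cubic and quadratic contributions above balance and $f$ is only $O(m^2)$. It then performs a three-stage refinement: (i) vary the residue $r\in\{0,\dots,m\}$ of $n$ modulo $m+1$, which moves $f$ in steps of size $\Theta(m)$ and lands it in an interval $[z-9m,\,z-3]$; (ii) delete a short block of $\ell-2=O(\sqrt m)$ consecutive elements just below $m$ from $S$, which shifts $f$ by an amount of order $\ell^2$ and, crucially, fixes the parity of $f-z$; (iii) delete individual small odd elements of $S$, each of which raises $\sigma^+(v)+\sigma^-(v)$ by exactly $2$ while creating no new unique shortest path through $v$, so that $f$ increases by exactly $2$ per deletion. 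Step~(iii) is the precise ``unit knob'' you were reaching for with products and extra back-arcs, and it is the idea your sketch is missing. Since this entire procedure goes through for every sufficiently large $m$, one obtains infinitely many $z$-\v Solt\'es digraphs for each $z$.
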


\begin{cor}
    There are infinitely many \v Solt\'es' digraphs.
\end{cor}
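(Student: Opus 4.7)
My strategy is to invoke Theorem~\ref{thr:main} directly. By the definition recalled in the introduction, a \v Solt\'es' digraph is exactly a digraph $D$ satisfying $W(D) - W(D \setminus v) = 0$ for every $v \in V$, which is the $z = 0$ instance of the condition in Theorem~\ref{thr:main}. Since that theorem asserts the existence of infinitely many digraphs meeting the condition for every integer $z$, I will specialize to $z = 0$ and read off the conclusion. The only ``step'' in the corollary is the observation that $0 \in \mathbb{Z}$ together with an application of the theorem; nothing has to be redone about distances or vertex deletions at the level of the corollary.

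Consequently, there is no obstacle internal to the corollary; all of the mathematical content lives inside Theorem~\ref{thr:main}. The hard part there will be to exhibit an explicit parametric family of digraphs whose defect $W(D) - W(D \setminus v)$ is (i) the same integer for every vertex $v$ and (ii) tunable to any prescribed value $z$. I would expect that to proceed via a small base digraph equipped with a padding or blow-up parameter, with the total distance of $D$ and of each $D \setminus v$ computed in closed form so that the defect becomes a controllable function of the parameter; the specialization to $z = 0$ then extracts the infinite subfamily of \v Solt\'es' digraphs that this corollary needs.
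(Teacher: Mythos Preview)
Your proposal is correct and matches the paper's approach: the corollary is immediate from Theorem~\ref{thr:main} with $z=0$, and the paper gives no separate argument beyond that. Your speculative description of how Theorem~\ref{thr:main} might be proved (padding/blow-up) differs from the actual method (tuning $n$ and the difference set $S$ in circulant digraphs $D(n,S)$), but that is irrelevant to the corollary itself.
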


The proof involves a specific class of vertex-transitive digraphs, circulant digraphs $D(n,S).$ Here the vertices are named with integers in $[n]=\{1,2,\ldots, n\}$, and there is a directed edge from $i$ to $j$ iff $j-i \in S \cup\{-1\}$ (considered modulo $n$).
Hereby we choose $n$ and $S$ carefully in a few steps.

\begin{exam}
Next to $D(11, \{1\}) \cong C_{11}$, also $D(85, \{4\})$ is \v Solt\'es' digraph (and even oriented graph). This one is depicted (on a torus) in~\cref{fig:D854}.
\end{exam}

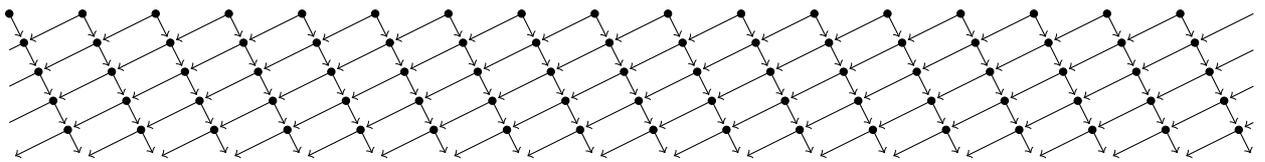
\begin{figure}[h]
    \centering

\begin{tikzpicture}[scale=0.385]
    \foreach \x in {0,1,2,3,4,5,6,7,8,9,10,11,12,13,14,15,16}{
        \foreach \y in {0,1,2,3,4}{
        
         \draw[fill] (2.5*\x+0.5*\y, 10-\y) circle (0.125);
        }
    }

    \foreach \x in {1,2,3,4,5,6,7,8,9,10,11,12,13,14,15,16}{
        \foreach \y in {0,1,2,3}{
        \draw[->] (2.5*\x+0.5*\y, 10-\y) -- (2.5*\x+0.5*\y-1.8, 9.1-\y);
        }
    }

    \foreach \x in {0,1,2,3,4,5,6,7,8,9,10,11,12,13,14,15,16}{
        \foreach \y in {0,1,2,3,4}{
        \draw[->] (2.5*\x+0.5*\y, 10-\y) -- (2.5*\x+0.5*\y+0.4, 9.2-\y);
        }
    }

    \foreach \x in {0,1,2,3,4,5,6,7,8,9,10,11,12,13,14,15,16}{
        \draw[->] (2+2.5*\x,6)--(0.2+2.5*\x,5.1);
    }

    \draw (0.5,9)--(0,8.75);
    \draw (1,8)--(0,7.5);
    \draw (1.5,7)--(0,6.25);

    \draw[->] (42.5,10)--(40.7,9.1);
    \draw[->] (42.5,8.75)--(41.2,8.1);
    \draw[->] (42.5,7.5)--(41.7,7.1);
    \draw[->] (42.5,6.25)--(42.2,6.1);
\end{tikzpicture}    
\caption{The \v Soltes' digraph $D(85,\{4\})$}\label{fig:D854}
\end{figure}

To have an idea that $W(D)=W(D \setminus v)$ for $D=D(85, \{4\})$ and a $v \in V(D)$, one can observe that after removing $v$ the distance from its left neighbour towards its right neighbour changes from $2$ to $22$ ($2 \equiv 21 \cdot -4+1 \pmod{85})$. There are fewer vertices and thus summands in the total distance of $D \setminus v$, but some of these distance are larger than they were within $D$.

Finally, we note that there are also non-vertex-transitive \v Solt\'es' digraphs.
Using the census from~\cite{POSV13}, by orienting its edges (and some edges in both directions), we can find e.g. (two) \v Solt\'es' digraphs which are $2$-regular ($\delta+=\delta^-=\Delta^+=\Delta^-=2$), bipartite and not vertex-transitive. See~\cite[\texttt{SoltesDigraphs/SoltesDigraph\_order960}]{C24}.
By combining multiple difference sets, we obtain an example of a \v Solt\'es' digraph of order $n=3306$, with a trivial automorphism group, which would disprove a digraph analogue of conjectures~\cite[Conj.~1.2]{BKS23} (and other beliefs) in the strongest possible sense.
This digraph is not regular in the sense that $\delta+=12<\Delta^+=14$ ($\delta^-=\Delta^-=13$).
See~\cite[\texttt{SoltesDigraphs/NonVTDigraph3306}]{C24}. By similar modifications, one may expect there to be infinitely many of these, implying the abundance.

In conclusion, these findings enforce intuition from~\cite{Cambie24} about the graph case, that the essence is the extremal question of the existence of infinitely many negative-\v Solt\'es' graphs.
If there is a clear abundance of negative-\v Solt\'es' graphs, we can expect an abundance of \v Solt\'es' graphs.

\section{Proofs on the abundancy of $z$-\v Solt\'es' digraphs}\label{sec:main}

We imagine the reader being familiar with the total distance of a graph $G=(V,E)$ and digraph $D=(V,A)$, and refer to the survey~\cite{KST23} on the Wiener index (total distance).

We use $\delta^+, \delta^-, \Delta^+, \Delta^-$ for the minimum and maximum out- and indegree of a digraph.
The in- and out-transmission of a vertex $v$ is denoted with $\sigma^-(v)= \sum_{u \in V} d(u,v)$ and $\sigma^+(v)= \sum_{u \in V} d(v,u).$
The total distance of a digraph equals $W(D)= \sum_{v \in V} \sigma^-(v)= \sum_{v \in V} \sigma^+(v)= \sum_{(u,v) \in V \times V} d(u,v)$

We first define the circulant digraphs in the following way (adding $-1$ to the difference set $S$ compared with the normal convention).
\begin{defi}
    For a subset $S \subseteq [n-2]=\{1,2,\ldots,n-2\}$, we  define the digraph $D(n, S)$ as the digraph with vertex set $V=[n]$ and directed edges $E=\{( i, i-1) \mid i \in [n] \} \cup \{( i, i+k) \mid i \in [n], k \in S \}.$
    Here numbers are interpreted modulo $n$.
\end{defi}

We can observe that this family leads to infinitely many vertex-transitive examples
for which $W(D)-W(D \setminus v)$ can be either positive, or negative. 

Here positive is trivial by considering dense digraphs, by the following digraph analogue of~\cite[Prop.~8]{KMS18} for which the short proof easily extends.

\begin{prop}
    Let $D$ be a digraph for which $\delta^-(D)+\delta^+(D)\ge n\ge 4.$
    Then both $D$ and $D \setminus v$ have diameter $2$ and consequently $W(D)-W(D \setminus v)>0$.
\end{prop}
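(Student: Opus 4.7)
The plan is to reduce the statement to two diameter computations: showing that both $D$ and $D\setminus v$ have diameter $2$. Once these are in hand, every distance between vertices of $V\setminus\{v\}$ is preserved under the deletion, and the identity $W(D)-W(D\setminus v)=\sigma^+(v)+\sigma^-(v)$ follows immediately, with the right-hand side visibly positive.

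For the diameter bound, I would take distinct $u,w$ with no arc $u\to w$ and analyse $N^+(u)\cap N^-(w)$. The crucial observation is that both $N^+(u)$ and $N^-(w)$ are contained in $V\setminus\{u,w\}$: loops are forbidden, and the missing arc $u\to w$ removes $w$ from $N^+(u)$ and $u$ from $N^-(w)$. Inclusion--exclusion then yields
\[
|N^+(u)\cap N^-(w)|\ge d^+(u)+d^-(w)-(n-2)\ge \delta^-(D)+\delta^+(D)-n+2\ge 2,
\]
so any common element $x$ certifies $u\to x\to w$, giving $d_D(u,w)\le 2$. The same inequality applied inside $D\setminus v$ (for any $u,w\in V\setminus\{v\}$ with no arc $u\to w$) produces at least two common neighbours inside $V\setminus\{u,w\}$; at most one of them is $v$, so at least one witnesses $d_{D\setminus v}(u,w)\le 2$.

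To conclude, for distinct $u,w\in V\setminus\{v\}$ I claim $d_{D\setminus v}(u,w)=d_D(u,w)$: if $d_D(u,w)=1$ the arc is untouched by the deletion, and if $d_D(u,w)=2$ then $d_{D\setminus v}(u,w)\ge 2$ (deletions do not shorten distances) and $\le 2$ by the diameter bound. Summing over $(V\setminus\{v\})^2$ then gives $W(D)-W(D\setminus v)=\sigma^+(v)+\sigma^-(v)$, which is at least $2(n-1)>0$ since $D$ has diameter $2$. The main subtlety is squeezing the intersection estimate to $\ge 2$ rather than the naive $\ge 1$ coming from $\delta^++\delta^-\ge n$; the stronger containment in $V\setminus\{u,w\}$ supplies exactly this extra unit, which is precisely what lets the argument survive the deletion of a single vertex.
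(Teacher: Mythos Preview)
Your proof is correct and follows essentially the same route as the paper: bound $|N^+(u)\cap N^-(w)|\ge 2$ via inclusion--exclusion inside $V\setminus\{u,w\}$, so that deleting a single vertex still leaves a length-$2$ path, and then observe that preserved distances give $W(D)-W(D\setminus v)=\sigma^+(v)+\sigma^-(v)>0$. Your write-up is in fact a bit more explicit than the paper's about why both neighbourhoods sit inside $V\setminus\{u,w\}$ and about the final Wiener-index identity, but the underlying argument is identical.
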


\begin{proof}
    Let $u,u'$ be any two arbitrary vertices for which $\vec { uu'} \not in A$ (otherwise $d(u,u')=1$) and take any vertex $v \not \in \{u,u'\}$. By pigeon hole principle there are at least two vertices $w,w'$ such that $u$ is an in-neighbour from them and $u'$ is an out-neighbour, since $\abs{ N^-(u) \cap N^+(u')} \ge \delta^-(u)+\delta^+(u)-(n-2)\ge 2.$
    Hence after removing a vertex, there is at least one of $w,w'$ present and $d(u,u') =2.$
\end{proof}

Related to this, in the appendix,~\cref{prop:diam_ge3}, we prove the stronger statement that diameter two implies that at least one vertex $v$ satisfies $W(D)-W(D \setminus v)>0$.

The infinitude of examples where the difference is negative, is proven in the following proposition.
Note that here we do not care about the number of such non-isomorphic digraphs. 
That would be hard due to counterexamples to the digraph analogue of Adam's conjecture~\cite{AP79}. But one can imagine that there are roughly $2^{\Theta(n^{0.5})}$ of them, which is a huge number.

\begin{prop}
    If $\max S \le 1/9 n^{1/2}$, for $n$ sufficiently large, $W(D_{n,S})-W(D_{n,S} \setminus v)<0$ for $v \in V(D_{n,S} ).$
\end{prop}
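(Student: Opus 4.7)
The plan is to use vertex-transitivity of $D = D(n,S)$ to reduce to a single-vertex analysis: write
\[
W(D) - W(D \setminus 0) = 2\sigma^+(0) - X,
\]
where $X := \sum_{u, w \in [n-1],\, u \ne w} \bigl( d_{D \setminus 0}(u,w) - d_D(u,w) \bigr) \ge 0$. It then suffices to show $X > 2\sigma^+(0)$. For the upper bound on $\sigma^+(0)$, setting $d := \max S$ and using only the forward jump of size $d$, one reaches $j$ from $0$ in at most $\lceil j/d \rceil$ forward jumps followed by $\lceil j/d \rceil d - j$ backward steps, so $d_D(0,j) \le j/d + d$, and summing over $j$ gives $\sigma^+(0) \le n^2/(2d) + nd = O(n^2/d)$.

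For the lower bound on $X$, I focus on the family of pairs $(u,\, u - b \bmod n)$ with $2 \le b \le L := \lfloor n/(d+1) \rfloor$ and $u \in \{1, 2, \ldots, b-1\}$. First I claim the backward path of length $b$ is the unique shortest $u \to (u-b)$ path in $D$: any alternative must use a forward jump and hence ``wrap around'' the cycle once, requiring total forward progress at least $n-b$, which needs at least $(n-b)/d$ forward jumps --- strictly more than $b$ because $b \le L$. This unique shortest path visits vertex $0$ precisely when $u \in \{1, \ldots, b-1\}$. In $D \setminus 0$, the same counting forces any replacement path to have length at least $(n-b)/d$, so each such pair contributes at least $(n-b)/d - b$ to $X$. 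Summing,
\[
X \;\ge\; \sum_{b=2}^{L} (b-1) \left( \frac{n-b}{d} - b \right) \;=\; \Omega\!\left( \frac{n^3}{d^3} \right).
\]
Hence $X / (2\sigma^+(0)) = \Omega(n/d^2)$, which under the hypothesis $d \le n^{1/2}/9$ is of order at least $81$, so $X > 2\sigma^+(0)$ once $n$ is sufficiently large.

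The main delicacy is the uniqueness of the shortest backward path and the matching lower bound for $d_{D \setminus 0}$: both rest on the observation that any path from $u$ to $u - b \bmod n$ using at least one forward jump must have total forward sum at least $n - b$, and with $\max S = d$ this forces at least $(n-b)/d$ forward jumps. A secondary check is that such a detour path actually exists in $D \setminus 0$, which follows because forward edges $i \to i+d$ leap over $0$ for every $i \in \{n-d+1, \ldots, n-1\}$. The factor $1/9$ in the hypothesis (ensuring $n/d^2 \ge 81$) leaves ample room to absorb the various $O(d)$ and lower-order corrections that appear when making the inequalities precise.
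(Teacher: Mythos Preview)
Your proof is correct and follows essentially the same strategy as the paper's: bound $\sigma^+(v)+\sigma^-(v)$ via the trivial diameter estimate $O(n^2/d)$, and lower-bound the total distance increase $X$ by focusing on pairs whose shortest path in $D$ is the all-backward walk through the deleted vertex, which in $D\setminus v$ is forced to wrap forward at cost at least $(n-b)/d$. The only real difference is bookkeeping: the paper indexes these pairs as $(i,\,n-j)$ with $i,j<2\sqrt{n}$ (a square region giving $\approx 4n$ pairs, each contributing $\approx 5\sqrt{n}$), while you parametrize by $b=i+j\le L=\lfloor n/(d+1)\rfloor$ and sum over the full triangular region, yielding the sharper asymptotic $X\gtrsim n^3/(6d(d+1)^2)$; this has the advantage of working uniformly for all $d\le\sqrt{n}/9$, whereas the paper's literal inequality $\sigma^+(v)+\sigma^-(v)<9.5\,n^{3/2}$ is actually only valid when $d$ is near the upper end of that range.
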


\begin{proof}
    Let $m=\max S$.
    Let $v$ be the vertex which corresponds to the number $n$.
    Note that $\sigma^+(v)=\sigma^-(v) \le n \cdot \diam D \le n \left( \frac{n}{m}+m \right)< 9.5n^{3/2}$.

    On the other hand, the shortest directed path from $0<i<2 \sqrt n$ to $n-j$ (with $0<j<2 \sqrt n$) had length $i+j$ within $D$, but is at least of length $\frac{n-i-j}{m}$ in $D \setminus v$.
    Hence the distances between those pairs increase by at least $(5-o(1))\sqrt n.$
    This is a total increase of at least $
    (20-o(1))n^{3/2}$, which is larger than $\sigma^+(v)+\sigma^-(v).$
    Equivalently, $W(D_{n,S})<W(D_{n,S} \setminus v).$    
\end{proof}

Knowing that $W(D)-W(D \setminus v)$ can be both positive and negative, by an abundance of examples, we now prove the main result,~\cref{thr:main}, that every exact difference can appear infinitely often.
It may also be clear that one cannot aim for an exact characterization as wondered about in~\cite[Prob.~3]{AOVVVY23}.

\begin{proof}[Proof of~\cref{thr:main}]
    We will prove that for every $z \in \mathbb Z,$ and every sufficiently large (as a function of $z$) positive integer $m$, there is a digraph $D$ of the form $D(n,S)$ with $\max S=m$ and $W(D)-W(D \setminus v)=z$ for every $v \in V(D).$  So fix $m$ large (in particular, larger than $\abs{z})$.

    Hereby we first choose $n$ such that the difference is near $z$ when considering $D(n,[m])$.

    \begin{claim}\label{clm:roughD1}
        There is a choice $n \sim 6m^2$ such that $D(n,[m])$ satisfies 
        $z- 9m \le W(D)-W(D-v) \le z-3$.
    \end{claim}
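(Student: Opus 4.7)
The plan is to exploit the vertex-transitivity of $D(n,[m])$ so that
$W(D) - W(D-v) = 2\sigma^+(v) - \Delta$
is independent of $v$, where
$\Delta := \sum_{u,w \ne v}\bigl[d_{D-v}(u, w) - d_D(u, w)\bigr] \ge 0$
records the total distance increase after deletion. I will compute $\sigma^+(v)$ essentially in closed form and estimate $\Delta$, then show that this quantity crosses zero near $n \sim 6m^2$ with step-size small enough that the window $[z-9m, z-3]$ is always hit.

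For $\sigma^+(v)$: the distance in $D(n,[m])$ from $v = 0$ to $k \in \{1, \ldots, n-1\}$ equals $\min(\lceil k/m\rceil,\, n-k)$, since the walk either jumps forward using edges of length in $[m]$ or takes $n-k$ backward steps around the cycle. Writing $n = (m+1)q + r$ with $0 \le r \le m$ and splitting the sum at the crossover point yields $2\sigma^+(v) = qn + O(mq)$, a quadratic expression in $n$ modulo small corrections controlled by $r$.

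For $\Delta$: the dominant contribution comes from pairs $(u,w)$ whose shortest $D$-path was the backward walk of length $u + (-w)$ through $v$. These are the pairs with $u \ge 1$, $-w \ge 1$ and $t := u + (-w) \le q$, of which there are $t-1$ for each $t$. Once $v$ is removed, such pairs must detour via a forward route of length $\approx \lceil(n-t)/m\rceil = q + \lceil (q-t)/m\rceil$. Summing yields $\Delta = q^3/6 + O(m^2 q)$. One also has to check that forward shortest paths incidentally passing through $v$ can always be rerouted at $O(1)$ additional cost per pair, so they contribute only at lower order.

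Combining, $W(D) - W(D-v) = qn - q^3/6 + O(mq^2)$, which vanishes at leading order precisely when $q \sim 6m$, i.e.\ $n \sim 6m^2$; this is where the $6m^2$ in the claim comes from. Moreover the derivative of this expression with respect to $n$ is $\approx -6m$ in that regime, so varying $n$ by $1$ shifts $W(D) - W(D-v)$ by roughly $-6m$. Since the target window $[z-9m, z-3]$ has width $9m - 3 > 6m$ and the function crosses this window monotonically as $n$ increases past $6m^2$ (using $|z| < m$ to ensure the window is near zero and hence in range), some integer $n$ in this range must land inside. The main obstacle will be the careful bookkeeping for $\Delta$: controlling the $O(1)$ fluctuations coming from the ceilings, verifying that pairs whose forward $D$-path used $v$ truly contribute only at lower order, and ensuring the step-size estimate is tight enough to guarantee that the window is never straddled without being hit.
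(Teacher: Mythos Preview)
Your approach matches the paper's: compute $2\sigma^+(v)$ and the detour sum $\Delta$ for $D(n,[m])$, locate the cancellation near $n\sim 6m^2$, and use a step-size argument to hit the window. But the step-size heuristic ``$\approx -6m$'' is where the real difficulty hides, and your estimate is misleading. In the parametrisation $n=a(m+1)+1+r$ with $a=6m-1$ fixed, decreasing $r$ by $1$ makes the detour sum drop by between roughly $15m$ and $21m-6$ (depending on how many of the ceilings $\lceil(n-i-j)/m\rceil$ tick over), while $2\sigma^+(v)$ drops by exactly $2(a+1)=12m$. Hence the step in $W(D)-W(D-v)$ ranges from about $3m$ up to $9m-6$, not ``roughly $6m$''. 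The window width $9m-3$ in the claim is not arbitrary: it was chosen to barely exceed this maximum step. The comparison you need is $9m-3>9m-6$, with almost no slack, not your ``$9m-3>6m$''.

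A related issue is that your asymptotic $W(D)-W(D-v)=qn-q^3/6+O(mq^2)$ carries an $O(m^3)$ error at $q\sim 6m$, which is the same order as the two main terms individually and swamps the $O(m)$-wide target window; so this expansion cannot even certify the sign change, let alone control the step. The paper instead fixes $a=6m-1$ and computes $W(D)-W(D-v)$ at the two endpoints $r=0$ and $r=m$ to $O(m)$ precision, using the exact formula $\sigma^+(v)=(m+1)\binom{a+1}{2}+r(a+1)$ together with an explicit evaluation of the detour sum (including the $\binom{a}{2}$ forward shortest paths through $v$ made entirely of $+m$ jumps --- the only forward paths with no equal-length alternative). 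This yields values $+\tfrac{5}{2}m^2+O(m)$ and $-\tfrac{7}{2}m^2+O(m)$, after which the discrete intermediate-value argument with the step bound above finishes the proof. In short, your outline is on the right track, but the ``careful bookkeeping'' you defer is the entire substance of the argument, and the heuristic numbers you quote would lead you to underestimate how tight it is.
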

    \begin{claimproof}
        Write $n=a(m+1)+1+r$, where $0 \le r \le m.$
        Let $D=D(n,S),$ where $S=[m].$
Then \begin{equation}\label{eq:transmissions}
\sigma^+(v)=\sigma^-(v)=(m+1) \binom{a+1}2+r(a+1).\end{equation}
The sum of the extra distance between pairs of vertices in $V \setminus v$, $\sum_{\{u,w\} \subset V \setminus v} d_{D \setminus v}(u,w)-d_{D}(u,w)$, equals 
\begin{equation}\label{eq:sum_detours}
    W(D-v)-\frac{n-2}{n}W(D)= \sum_{i=1}^a \sum_{j=1}^{a-i} \left( \ceilfrac{n-j-i}{m}-(j+i)\right)+\binom{a}{2} = \frac{m+1}{m}\binom{a}{3}+O(a^2).
\end{equation}
The latter is the sum of detours between elements near $n$, as well as the little detour between elements $n-j$ and $i$ where both $i$ and $j$ are multiples of $m$ and so the shortest distances becomes larger after removing $v=n.$

If the shortest path in $D$ from $i$ to $n-j$ (with $n-j>i$) uses $n$, it is because all arcs go one down (correspond with difference $-1$).
Thence $d_D(i,n-j)=i+j$ and $d_{D \setminus v}(i,n-j)=\ceilfrac{n-j-i}{m}.$ The latter is strictly larger than the former exactly when $i+j \le a.$

If every shortest path from $n-j$ to $i$ (with $n-j>i$) uses $n$, it is because it is unique and all its directed edges go up with $m$.
Thus $d_D(i,n-j)=\frac{i+j}{m}$, $d_{D \setminus v}(i,n-j)=d_D(i,n-j)+1,$ and 
$n-(i+j)>\frac{i+j}{m}$.
This implies that $i/m+j/m\le a$. There are $\sum_{i'=1}^{a-1} (a-i')=\binom a2$ many pairs of positive integers $(i/m, j/m)$ satisfying this.

Comparing~\eqref{eq:transmissions} and~\eqref{eq:sum_detours} and $\sigma^+(v), \sigma^-(v)$, we conclude that $a\sim 6m$ to ensure that $W(D-v) \sim W(D)$.
More precisely (see~\cref{sec:app2} for the computation), if $a=6m-1$ and $r=m,$ 
$W(D-v)- W(D)= 36m^3+\frac{91}{2}m^2+O(m) - 36m^3-42m^2-O(m)=\frac{7}2m^2-O(m)>z.$

If $a=6m-1$ and $r=0,$ then the analoguous computation gives 

$W(D-v)- W(D)= 36m^3+\frac{55}{2}m^2+O(m) - 36m^3-30m^2-O(m)=-\frac{5}2m^2-O(m)<z-9m.$

Changing $r$ from $n$ to $0$ by one at a time, the value of~\eqref{eq:sum_detours} changes by at least $(a-m)+\ldots+(a-5m) \sim 15m$ and at most $a+(a-m)+\ldots+(a-5m) = 21m-6$, while $\sigma^+(v)+\sigma^-(v)$ changes by $2(a+1)=12m.$
Thus there is a choice of $r$ (and $n=(6m-1)(m+1)+1+r$) for which 
$W(D-v) -W(D)$ satisfies the condition of the claim.    \end{claimproof}

Next, we discard some large elements from $[m]$

    \begin{claim}\label{clm:roughD2}
        There is value $\ell=o(m)$ such that $D(n,[m-\ell] \cup \{ m-1,m\})$ satisfies 
        $z- o(m) \le W(D)-W(D-v) \le z$ and $W(D)-W(D-v)-z$ is even.
    \end{claim}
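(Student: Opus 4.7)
The plan is to start from the digraph $D_0 := D(n,[m])$ produced by \cref{clm:roughD1} and gradually thin the interior of its difference set while retaining the two largest elements, so that the diameter stays $\Theta(m)$ and the estimates from \cref{clm:roughD1} still apply in skeleton form. Concretely, I would keep $n$ and $r$ as chosen in \cref{clm:roughD1}, define $D_\ell := D(n, [m-\ell] \cup \{m-1, m\})$ and $f(\ell) := W(D_\ell) - W(D_\ell - v)$ for $\ell = 0, 1, 2, \ldots$, and recall $f(0) \in [z - 9m,\, z - 3]$.

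The main step is to control how $f(\ell)$ evolves. For $\ell = o(m)$ the retention of $m-1$ and $m$ in $S$ keeps the diameter of $D_\ell$ at $\Theta(m)$, so the expressions \eqref{eq:transmissions} and \eqref{eq:sum_detours} admit analogous expansions with $m$ replaced by the effective step-size $m-\ell$ at leading order, plus lower-order corrections from the two retained large jumps. Carrying this through, I would show that $|f(\ell+1) - f(\ell)| = o(m)$: deleting a single interior difference $k$ only affects pairs whose $D_\ell$-geodesic used exactly that difference, and a direct counting bounds the resulting asymmetry between the increases of $W(D_\ell)$ and $W(D_\ell - v)$ by an $o(m)$ quantity. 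On the other hand, $f(\ell) \to +\infty$ as $\ell \to m-2$, by comparison with the sparse-cycle limit where $f$ becomes positive of order $m^2$. Since $f(0) < z$ and $f$ eventually exceeds $z$ while moving by at most $o(m)$ per step, some $\ell^\star = o(m)$ must yield $f(\ell^\star) \in [z - o(m),\, z]$.

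For the parity requirement, I would exploit the flexibility afforded by the $o(m)$-wide window: the consecutive values $f(\ell^\star)$ and $f(\ell^\star - 1)$ differ by an integer of size $o(m)$, so if that difference is odd, one of them matches $z \bmod 2$ and still lies within $[z - o(m),\, z]$. In the borderline case that every per-step change in the relevant window happens to be even, a one-off swap inside $S$ — exchanging a deleted interior difference for a neighbouring non-deleted one — produces a third candidate whose $f$-value remains in the window but has opposite parity, completing the argument.

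The hard part will be establishing the sharp bound $|f(\ell+1) - f(\ell)| = o(m)$. Removing a single interior difference $k$ simultaneously lengthens some shortest paths in $D_\ell$ and in $D_\ell - v$, and one must compare these two increases carefully, tracking which pairs have geodesics forced through the vertex $v$ and which rely specifically on the deleted difference. This mirrors, at the incremental level, the explicit computation carried out in \cref{clm:roughD1}, and I expect the same kind of case split (pairs going down to $i$ versus pairs going up to $n-j$, distinguished by which differences appear in their shortest decomposition) to do the job.
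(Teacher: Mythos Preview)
Your intermediate-value strategy has the right shape, but there is a genuine gap at the point where you conclude $\ell^\star = o(m)$. From $f(0)\in[z-9m,\,z-3]$, an upper bound $|f(\ell+1)-f(\ell)|=o(m)$, and the fact that $f$ eventually exceeds $z$, you can only deduce that \emph{some} $\ell$ places $f(\ell)$ in the window $[z-o(m),\,z]$; nothing forces that $\ell$ to be small. If, for instance, each step increased $f$ by only $\Theta(1)$, closing a deficit of size up to $9m$ would require $\Theta(m)$ steps, and $\ell^\star$ would fail to be $o(m)$. What is missing is a quantitative \emph{lower} bound on the growth of $f$, not merely an upper bound on its increments. (Your appeal to the ``sparse-cycle limit'' at $\ell\to m-2$ does not help here: that regime lies far outside the range $\ell=o(m)$ you need, and in any case $S$ still contains the large jumps $m-1,m$ there.)

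The paper closes this gap by computing the change outright rather than bounding it. Passing from $S=[m]$ to $S=[m-\ell]\cup\{m-1,m\}$ raises each of $\sigma^{\pm}(v)$ by $\binom{\ell}{2}$ (the vertices $i$ with $km-\ell<i<k(m-1)$ for some $k\ge 1$ become one step further from $v$), while the detour sum \eqref{eq:sum_detours} acquires only $O(\ell)$ new unit detours, coming from the freshly unique $(m{-}1)$-geodesics through $v$. Hence $f(\ell)-f(0)=2\binom{\ell}{2}-\ell=\ell(\ell-2)$, so the deficit of at most $9m$ is already erased at $\ell=O(\sqrt m)$, and the successive increments $2\ell-1=O(\sqrt m)=o(m)$ deliver the window width automatically. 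The explicit formula also settles parity cleanly: $\ell(\ell-2)\equiv\ell\pmod 2$, so one simply picks $\ell$ of the required residue class, making your swap-based fallback unnecessary.
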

    \begin{claimproof}
        We choose $n,m$ such that $D(n,[m])$ satisfies~\cref{clm:roughD1}.
        Note that the formula~\eqref{eq:sum_detours} is still valid, except that there are $\ell-2$ choices for $j$, which are exactly $(m-1)j'$ where $j' \in [\ell-2]$, such that there is a unique shortest path from $n-j$ to $m-1$ (with directed edges corresponding with difference $m-1$). For these, the length of a shortest path goes up by one.

        On the other hand, the distance from $v$ to $i$ increases by one when 
        $km-\ell<i<k(m-1)$ for some $k \ge 1.$ This implies that the formula~\eqref{eq:transmissions} goes up by $\binom{\ell} 2$.

        Thus we can choose $\ell = O( \sqrt m)$ such that the estimate of this lemma is satisfied.
        Note hereby that $2 \binom{\ell}{2}-\ell=\ell(\ell-2)$ has the parity of $\ell$ and it is $0,3$ (both bounded by $3$) for $\ell \in \{2,3\}$.
    \end{claimproof}

    Finally, we discard $\frac{W(D-v)+z-W(D)}{2}$ many odd elements from $S=[m-\ell] \cup \{ m-1,m\}$, where each of these odd elements is bounded by $\frac{m- \ell-2}{2}$.
    Doing this, there are no additional unique shortest paths created that use $n$ (all lengths $\ell'$ of the directed edges need to be the same, but either $2 \ell'$, both $\ell'+1, \ell'-1$ or both $\ell' \pm 2$ belong to $S$).
    Both the in- and out-transmission go up by one deleting one such odd element. Since $\sigma^+(v)+\sigma^-(v)$ goes up by two by one removal, at the end we obtained a set $S$ for which $W(D)-W(D-v)= z.$
\end{proof}



\bibliographystyle{abbrv}
\bibliography{ref}

\section*{Appendix}\label{sec: appendix}

\appendix

\section{Few negative \v Soltes' digraphs}\label{sec:app1}

In this part, we consider digraphs with diameter $2$ and prove the analogue of~\cite[Prop.~4]{Cambie24}. Note that almost all digraphs of order $n$ have diameter $2$, considering the Erd\H{o}s-Renyi random graph model for digraphs. The proof combines the ideas in the proofs from~\cite[Prop.~4]{Cambie24} and~\cite[Prop.~6]{Cambie24Hyper}.

\begin{prop}\label{prop:diam_ge3}
 A digraph $D$ of order $n>1$ with $\diam(D)\le 2$ has at least one vertex $v$ for which either $D \setminus v$ is disconnected, or $W(D \setminus v)<W(D).$
\end{prop}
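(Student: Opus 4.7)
The plan is to argue by contradiction: suppose for every $v \in V(D)$, the digraph $D \setminus v$ is strongly connected and $W(D \setminus v) \ge W(D)$. I aim to derive a contradiction from $\diam(D) \le 2$.

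For each $v$, the identity
\[
  W(D) - W(D \setminus v) \;=\; \sigma^+(v) + \sigma^-(v) - I_v, \qquad I_v := \sum_{x,y \in V \setminus v}\bigl(d_{D \setminus v}(x,y) - d_D(x,y)\bigr) \ge 0,
\]
combined with the simplification $\sigma^+(v) + \sigma^-(v) = 4(n-1) - d^+(v) - d^-(v)$ (valid since $\diam(D) \le 2$), turns the hypothesis into $I_v \ge 4(n-1) - d^+(v) - d^-(v)$ for every $v$.

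The key structural observation is that, for $x, y \ne v$, $d_{D \setminus v}(x,y) > d_D(x,y)$ only if $d_D(x,y) = 2$ and $v$ is the unique common transit vertex of the pair, i.e.\ $C(x,y) := \{w : xw, wy \in A\} = \{v\}$. Let $U$ denote the set of such unique-transit ordered pairs and $v_{x,y}$ the associated transit. Summing the hypothesis over all $v$ and regrouping by transit yields
\[
  \sum_{(x,y) \in U}\bigl(d_{D \setminus v_{x,y}}(x,y) - 2\bigr) \;\ge\; \sum_v\bigl(\sigma^+(v) + \sigma^-(v)\bigr) \;=\; 2W(D).
\]
Since $|U| \le n(n-1) - |A|$ while, under $\diam(D)\le 2$, $2W(D) = 4n(n-1) - 2|A|$, one has $|U| < 2W(D)$; hence the above inequality is violated the moment each summand attains its minimum possible value of $1$.

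The crux---and where I would combine the ideas of~\cite[Prop.~4]{Cambie24} and~\cite[Prop.~6]{Cambie24Hyper}---is to establish the per-pair bound $d_{D \setminus v_{x,y}}(x,y) \le 3$ for every $(x,y) \in U$. For such a pair with transit $v$, each $u \in N^+(x) \setminus \{v\}$ satisfies $uy \notin A$ (else $u \in C(x,y)$, contradicting uniqueness), so $d_D(u,y) = 2$; if some $w \in C(u,y) \setminus \{v\}$ exists, then $x \to u \to w \to y$ already witnesses $d_{D \setminus v}(x,y) \le 3$. The hard subcase is when every $u \in N^+(x) \setminus \{v\}$ forces $C(u,y) = \{v\}$, pushing $(u,y) \in U$ with the same transit and forcing the rigid inclusion $N^+(x) \setminus \{v\} \subseteq N^-(v)$. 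Combining this cascade with its symmetric statement on $N^-(y) \setminus \{v\}$ should either rule out such a configuration or suffice to bound its total contribution strictly below $2W(D)$, closing the contradiction.
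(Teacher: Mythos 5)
Your reductions are sound: the identity $W(D)-W(D\setminus v)=\sigma^+(v)+\sigma^-(v)-I_v$, the formula $\sigma^+(v)+\sigma^-(v)=4(n-1)-d^+(v)-d^-(v)$ under $\diam(D)\le 2$, the fact that only unique-transit pairs can see their distance grow, and the regrouping $\sum_v I_v=\sum_{(x,y)\in U}\bigl(d_{D\setminus v_{x,y}}(x,y)-2\bigr)$ with $\abs U\le n(n-1)-\abs A<4n(n-1)-2\abs A=2W(D)$. But the whole argument hinges on the per-pair bound $d_{D\setminus v_{x,y}}(x,y)\le 3$, and that is exactly what you leave open ("should either rule out such a configuration or suffice\dots"). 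This is a genuine gap, not a routine verification, because the bound is false for diameter-$2$ digraphs in general. Take vertices $a,a_0,b_1,b_2,c_0,c,v$, put arcs in both directions along the path $a\,a_0\,b_1\,b_2\,c_0\,c$ and in both directions between $v$ and every other vertex: then $\diam(D)=2$, all in- and out-degrees are at least $2$, every $D\setminus u$ is strongly connected, the pair $(a,c)$ has unique transit $v$, yet $d_{D\setminus v}(a,c)=5$. So the bound cannot be proved from $\diam(D)\le 2$ alone; you would have to extract it from the global counterexample hypothesis $W(D\setminus u)\ge W(D)$ for all $u$, and your sketch of the hard subcase (the cascade $N^+(x)\setminus\{v\}\subseteq N^-(v)$ and its symmetric version) does not show how the hypothesis rules out configurations like the one above, nor how their total contribution stays below $2W(D)$.

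The paper sidesteps the need for any uniform per-pair bound by an averaging argument over a single well-chosen vertex: connectivity of all $D\setminus v$ forces $\delta^+,\delta^-\ge 2$, hence $\abs A\ge 2n$ and at most $n(n-3)$ associated (unique-transit) pairs in total, so some vertex $v$ is the transit of at most $n-3$ pairs. For that $v$, each associated pair $(u,u')$ can be rerouted through some intermediate $w$ whose two subpairs are not associated to $v$ (possible precisely because at most $n-4$ other pairs are associated to $v$), giving $d_{D\setminus v}(u,u')\le 4$, i.e.\ an increase of at most $2$ per pair and at most $2(n-3)$ in total, which loses against $\sigma^+(v)+\sigma^-(v)\ge 2(n-1)$. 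If you want to salvage your global summation, you need some substitute for this selection step — for instance a bound on the increase of each pair in terms of quantities you can control on average — but as written the crux of the proof is missing.
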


\begin{proof}
    Assume $D$ would be a counterexample to the statement (which trivially needs to satisfy $n>4$).
    First notice that $\delta^-(D)$ and $\delta^+(D)$ are both at least two, since otherwise removing the vertex from the unique incoming edge or unique outgoing edge, disconnects the digraph.
    By the handshaking lemma, the number of arcs is thus at least $2n.$
    For a pair of vertices $u,u',$ there can be at most one vertex $w$ in $V \setminus \{ u,u'\}$ whose deletion increases $d(u,u').$
    In that case, $\vec{uu'}$ is not an arc of $D,$ while $\vec{uw}$ and $\vec{wu'}$ are.
    In this case, associate $w$ to the ordered pair $(u,u').$
    
    Since there are $n(n-1)-\abs{A(D)} \le n(n-3)$ ordered pairs which are not a directed edge, there is a vertex $v$ associated with at most $n-3$ pairs.

    For each pair $(u,u')$ to which it is associated, we know that there is at least one of the $n-3$ neighbours $w$ in $V \setminus \{ v,u,u'\}$ for which both ordered pairs $(u,w)$ and $(w,u')$ were not associated to $v$.
    Hence $d_{D \setminus v}(u,u') \le d_{D \setminus v}(u,w)+d_{D \setminus v}(w,u') \le 4.$
    Thus there are at most $n-3$ pairs for which the distance goes up by at most $2.$
    The latter is smaller than $\sigma^+(v)+\sigma^-(v)\ge 2(n-1)$, i.e., $W(D \setminus v)<W(D).$
\end{proof}

\section{Detailed computations}\label{sec:app2}

If $a=6m-1$ and $r=m,$ thus $n=(a+1)(m+1)$, and using $s=i+j-1$, we obtain

\begin{align*}
     \sum_{i=1}^a \sum_{j=1}^{a-i} \left( \ceilfrac{n-j-i}{m}-(j+i)\right)&= 
    \sum_{s=1}^{a-1} s \ceilfrac{(m+1)(a-s)}{m}\\   
    &= 
    \sum_{s=1}^{a-1} s(a-s) + \sum_{s=1}^{a-1} s \ceilfrac{(a-s)}{m}\\
    &=\binom{a+1}{3}+\sum_{k=0}^4 \sum_{s=km-1}^{(k+1)m-2} (6-k)s + \sum_{s=1}^{m-2} 6s\\
    &=\binom{6m}{3}+\sum_{k=1}^6 \binom{km-1}{2}\\
    &= 36m^3+\frac{55}{2}m^2+O(m).
\end{align*}

If $a=6m-1$ and $r=0,$ thus $n=a(m+1)+1$, and using $s=i+j$, we obtain exactly the same sum up to $\binom a2$,

\begin{align*}
     \sum_{i=1}^a \sum_{j=1}^{a-i} \left( \ceilfrac{n-j-i}{m}-(j+i)\right)&= 
    \sum_{s=2}^{a} (s-1) \ceilfrac{(m+1)(a-s)+1}{m}\\   
    &= 
    \sum_{s=2}^{a} (s-1)(a-s) + \sum_{s=2}^{a} (s-1) \ceilfrac{(a-s+1)}{m}\\
    &=\binom{a}{3}+\sum_{s'=1}^{a-1} s' \ceilfrac{(a-s')}{m}\\
    &=-\binom a2 + \binom{a+1}{3}+\frac{91}2m^2+O(m)\\
    &= 36m^3+\frac{19}{2}m^2+O(m).
\end{align*}

\section{A \v Soltes' digraph with trivial automorphism group.}\label{sec:app3}

For $m=23$, executing the procedure from~\cref{sec:main}, gives us e.g. $a=6m-1, r=17$ and
and $n=a(m+1)+1+r=3306.$ 
After this, we can find multiple sets $S$ for which the vertex-transitive digraph $D(n,S)$ satisfies \v Soltes' property.

This is the case for
$S_1=\{ 2, 4,  6, 8, 10, 9, 12, 13, 14, 15, 22, 23\},
S_2=\{ 2, 4,  6, 8, 10, 7, 12, 13, 14, 15, 22, 23\}$ and $
S_3=\{ 2, 4,  6, 8, 10, 11, 12, 13, 14, 15, 22, 23\}$.

By choosing different difference sets to construct the out-neighbours of different vertices in a good way (not every combination does the job), we can end up with a \v Soltes' digraph with $n$ vertex orbits.

We did this, for the digraph on $V=[n]$ and directed edges

$\{ (i,i+k) \mid k \in S_1, i \equiv 0 \pmod 3 \vee i \in \{ 7,  13,  19\} \} \cup \{ (i,i+k) \mid k \in S_3, i \equiv 2 \pmod 3 \}$
$\cup \{ (i,i+k) \mid k \in S_2, i \equiv 1 \pmod 3 \wedge i \not \in \{ 7,  13,  19\} \}.  $

\end{document}